\documentclass[oneside]{amsart}
\usepackage[colorlinks=true,allcolors=blue!80!black]{hyperref}
\usepackage{tikz, tikz-cd}
\usetikzlibrary{matrix, snakes, patterns, shadows.blur, backgrounds}

\usepackage{amssymb, amsthm}
\usepackage{enumerate, amsfonts, amsxtra,amsmath,latexsym,epsfig, color}
\usepackage{mathrsfs, MnSymbol}
\usepackage{geometry}                		% See geometry.pdf to learn the layout options. There are lots.
\geometry{letterpaper}                   		% ... or a4paper or a5paper or ... 
\usepackage{graphicx}
\usepackage{subcaption}
\usepackage{autobreak}
\usepackage{color}
\usepackage{amsmath,amscd}	
\usepackage{mathtools}
\usepackage{stmaryrd}
\usepackage{animate}
\usepackage{tikz-cd}
\usepackage{soul}
\usepackage{faktor}
\usepackage{hyperref}

\usepackage{xypic}

\input xy
\xyoption{all}

%%%%%%%%%%%%%%%%%%%%%%% Setup environments %%%%%%%%%%%%%%%%%%%%%

\newtheorem {theorem}{Theorem}[section]
\newtheorem {lemma} [theorem] {Lemma}
\newtheorem {proposition} [theorem] {Proposition}
\newtheorem {corollary} [theorem] {Corollary}

% Alpha labelled theorems

\theoremstyle{definition}
\newtheorem{definition}[theorem]{Definition}
\newtheorem{remark}[theorem]{Remark}

%Quotients

\newcommand{\leftQ}[2]{\left.\raisebox{-.2em}{$#2$}\middle\backslash\raisebox{.2em}{$#1$}\right.}

\def\co{\colon\thinspace}

\def\C {\mathbb C}
\def\R {\mathbb R}

\def\Hy{\mathbf{H}}

\def\D{\mathbb{D}}

\def\mc {\mathcal}

\def\Z {\mathbb{Z}}

\def\cP{\mathcal{P}}

 % For relative models.  Should it have the same notation as models?

\DeclareMathOperator{\PU}{PU}
\DeclareMathOperator{\Isom}{Isom}
\DeclareMathOperator{\SU}{SU}
\DeclareMathOperator{\SO}{SO}
\DeclareMathOperator{\Sp}{Sp}
\DeclareMathOperator{\PSL}{PSL}

%\date{}							% Activate to display a given date or no date

\begin{document}
\title[Relatively geometric actions of K\"ahler groups]{Relatively geometric actions of K\"ahler groups on $\mathrm{CAT}(0)$ cube complexes}
%\author{Corey Bregman and Kejia Zhu}
%\address{Department of Mathematics, Statistics and Computer Science, University of Illinois at Chicago}

\author{Corey Bregman}
\address{Department of Mathematics and Statistics\\ University of Southern Maine}
\email{corey.bregman@maine.edu}
\urladdr{https://sites.google.com/view/cbregman}

\author{Daniel Groves}
\address{Department of Mathematics, Statistics and Computer Science\\ University of Illinois at Chicago}
\email{dgroves@uic.edu}
\urladdr{http://homepages.math.uic.edu/$\sim$groves/} 

\author{Kejia Zhu}
\address{Department of Mathematics, Statistics and Computer Science\\ University of Illinois at Chicago}
\email{kzhu14@uic.edu}
\urladdr{https://sites.google.com/view/kejiazhu}

\subjclass[2020]{	20F65, 22E40 (primary), 32J27, 32J05, 	57N65 (secondary)}
\keywords{CAT(0) cube complexes, relatively hyperbolic groups, complex hyperbolic lattices, toroidal compactification}
\date{}
\maketitle
%\tableofcontents

\begin{abstract}
We prove that for $n\geq 2$, a non-uniform lattice in $\text{PU}(n,1)$ does not admit a relatively geometric action on a $\mathrm{CAT}(0)$ cube complex, in the sense of \cite{einstein2020relative}. As a consequence, if $\Gamma$ is a non-uniform lattice in a non-compact semisimple Lie group $G$ without compact factors that admits a relatively geometric action on a $\mathrm{CAT}(0)$ cube complex, then $G$ is commensurable with $\SO(n,1)$. We also prove that if a K\"ahler group is hyperbolic relative to residually finite parabolic subgroups, and acts relatively geometrically on a $\mathrm{CAT}(0)$ cube complex, then it is virtually a surface group.
%We also prove that given a relatively hyperbolic group with residually finite parabolic subgroups, if it is K\"ahler and acts relatively geometrically on a $\mathrm{CAT}(0)$ cube complex, then it is virtually a surface group.
\end{abstract}

\section{Introduction}

A finitely generated group is called \emph{cubulated} if it acts properly cocompactly on a $\mathrm{CAT}(0)$ cube complex.  Agol \cite{AgolHaken}, building on the work of Wise \cite{wise2012structure} and many others, proved that cubulated hyperbolic groups enjoy many important properties, and used this to solve several open conjectures in 3-manifold topology, in particular the Virtual Haken and Virtual Fibering Conjectures. Wise \cite[$\S17$]{wise2012structure} proved the Virtual Fibering Conjecture in the non-compact, finite-volume setting, using the relatively hyperbolic structure of the fundamental group.

Einstein--Groves define the notion of a \emph{relatively geometric} action of a group pair $(\Gamma,\cP)$ on a $\mathrm{CAT}(0)$ cube complex \cite{einstein2020relative}. For such an action, elements of $\cP$ act elliptically.  This allows the possibility that even though the elements of $\cP$ might not act properly on any $\mathrm{CAT}(0)$ cube complex, there still may be a relatively geometric action. 
 Relatively geometric actions are a natural generalization of proper actions and share many of the same features as in the proper case, especially when $\Gamma$ is hyperbolic relative to $\cP$ .  

 Uniform lattices in $\SO(3,1)$ always act geometrically thus relatively geometrically on $\mathrm{CAT}(0)$ cube complexes  \cite{bergeron2012boundary}. Bergeron--Haglund--Wise \cite{BHW} prove that in higher dimensions, lattices in $\SO(n,1)$ which are arithmetic of simplest type are cubulated.  It also follows from this and Wise's quasi-convex hierarchy theorem \cite{wise2012structure} that many ``hybrid" hyperbolic $n$-manifolds have cubulated fundamental groups. 
 In the relatively geometric setting, using the work of Cooper--Futer \cite{CooperFuter}, Einstein--Groves proved that non-uniform lattices in $\SO(3,1)$ also admit relatively geometric actions, relative to their cusp subgroups \cite{einstein2020relative}. In fact, they prove that if $(G,\cP)$ is hyperbolic relative to free abelian subgroups and the Bowditch boundary $\partial (G,\cP)$ is homeomorphic to $S^2$, then $G$ is isomorphic to a non-uniform lattice in $\SO(3,1) $ if and only if $(G,\cP)$ admits a relatively geometric action on a $\mathrm{CAT}(0)$ cube complex. This result is a relative version of the work of Markovic \cite{Markovic2013} and Ha\"issinsky \cite{Haissinsky2015} in the convex-cocompact setting, giving an equivalent formulation of the Cannon conjecture in terms of actions on hyperbolic $\mathrm{CAT}(0)$ cube complexes. 
It is not known in general whether the above results extend to all lattices in $\SO(n,1)$ for $n\geq 3$. 

In contrast, work of Delzant--Gromov implies that uniform lattices in $\PU(n,1)$ are not cubulated \cite{delzant2005cuts}. 
Recall that a group $\Gamma$ is \emph{K\"ahler} if $\Gamma\cong \pi_1(X)$ for some compact K\"ahler manifold $X$. If $\Gamma\leq \PU(n,1)$ is a torsion-free, uniform lattice, then $\Gamma$ acts freely, properly discontinuously cocompactly on complex hyperbolic $n$--space $\Hy^n_\C$.  The quotient $M=\leftQ{\Hy_\C^n}{\Gamma}$ is a closed, negatively curved K\"ahler manifold, and in particular $\Gamma$ is a hyperbolic, K\"ahler group. In this context, Delzant--Gromov showed that any infinite K\"ahler group that is hyperbolic and cubulated is commensurable to a surface group of genus $g\geq 2$ \cite{delzant2005cuts}. Thus $\Gamma$ is not cubulated for $n\geq 2$.  Since every uniform lattice in $\PU(n,1)$ is virtually torsion-free,  it follows that uniform lattices in $\PU(n,1)$ are not cubulated if $n\geq 2$. 
 
On the other hand, uniform lattices in $\PU(1,1)=\SO(2,1)$, are finite extensions of hyperbolic surface groups, hence are hyperbolic and cubulated. Similarly, non-uniform lattices in $\PU(1,1)$ are the orbifold fundamental groups of surfaces with finitely many cusps, hence virtually free. Such lattices admit both proper cocompact and  relatively geometric actions on $\mathrm{CAT}(0)$ cube complexes.  Since the cusp subgroups of a non-uniform lattice in $\PU(n,1)$ ($n \ge 2)$ are virtually nilpotent but not virtually abelian, it follows from a result of Haglund \cite{haglund2021isometries} that such a lattice does not admit a proper action on a $\mathrm{CAT}(0)$ cube complex (see Proposition~\ref{prop:No Proper Action} below).

However, the parabolic subgroups do not yield such an obstruction to the existence of a relativeley geometric action.  Thus, this leaves open the question of whether non-uniform lattices in $\PU(n,1)$ admit relatively geometric actions on $\mathrm{CAT}(0)$ cube complexes for $n\geq 2$.  Our first result answers this question in the negative.

\begin{theorem}\label{thm:NoRelativeAction} Let $\Gamma\leq \PU(n,1)$ be a non-uniform lattice with $n\geq 2$, and let $\cP$ be the collection of cusp subgroups of $\Gamma$.  Then $(\Gamma,\cP)$ does not admit a relatively geometric action on a $\mathrm{CAT}(0)$ cube complex.  
\end{theorem}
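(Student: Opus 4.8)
The plan is to deduce Theorem~\ref{thm:NoRelativeAction} from its K\"ahler analogue---namely, our result that a K\"ahler group which is hyperbolic relative to residually finite parabolic subgroups and acts relatively geometrically on a $\mathrm{CAT}(0)$ cube complex is virtually a surface group---by way of the toroidal compactification. First I would replace $\Gamma$ by a neat finite-index subgroup $\Gamma_0$: a relatively geometric action of $\Gamma$ restricts to a relatively geometric action of $\Gamma_0$ on the same cube complex, with the induced peripheral structure, so it suffices to treat $(\Gamma_0,\cP_0)$. For $\Gamma_0$ neat, each $P\in\cP_0$ is a torsion-free lattice in the $(2n-1)$-dimensional Heisenberg group, with infinite cyclic centre $Z_P$ and quotient $P/Z_P\cong\Z^{2n-2}$, and $2n-2\geq 2$ since $n\geq 2$.

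Assume for contradiction that $(\Gamma_0,\cP_0)$ acts relatively geometrically on a $\mathrm{CAT}(0)$ cube complex. I would then invoke a relatively geometric form of Dehn filling (in the spirit of \cite{einstein2020relative}): for $N$ in a cofinal set of positive integers, the filling $\Gamma_0(N):=\Gamma_0/\langle\langle Z_P^{(N)}\colon P\in\cP_0\rangle\rangle$---where $Z_P^{(N)}\leq Z_P$ is the index-$N$ subgroup---is a non-elementary group, hyperbolic relative to $\cP_0(N)=\{P/Z_P^{(N)}\}$, which acts relatively geometrically on a $\mathrm{CAT}(0)$ cube complex; moreover each filled peripheral $P/Z_P^{(N)}$ embeds in $\Gamma_0(N)$ and is virtually $\Z^{2n-2}$, hence residually finite. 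On the other hand, $\Gamma_0(N)$ is the orbifold fundamental group of the $N$-th toroidal orbifold compactification $\overline{X}_N$ of $X=\Hy^n_\C/\Gamma_0$, obtained by gluing in, transverse to each boundary abelian variety, an orbifold disc bundle with cone angle $2\pi/N$ along the core. By the theory of toroidal compactifications of ball quotients, $\overline{X}_N$ is a compact K\"ahler orbifold which is developable---it is a finite quotient of the smooth toroidal compactification of a suitable finite cover of $X$---so some finite-index subgroup $K\leq\Gamma_0(N)$ is the fundamental group of a compact K\"ahler manifold.

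Now $K$ is a K\"ahler group which is hyperbolic relative to the peripheral subgroups induced from $\cP_0(N)$, each of which is virtually $\Z^{2n-2}$ and hence residually finite, and $K$ acts relatively geometrically on a $\mathrm{CAT}(0)$ cube complex. By the K\"ahler analogue, $K$ is virtually a surface group, so $K$ is either word-hyperbolic or virtually abelian. But $K$ contains a copy of $\Z^{2n-2}$ with $2n-2\geq 2$, so $K$ is not word-hyperbolic; and $K$ is not virtually abelian, being of finite index in the non-elementary relatively hyperbolic group $\Gamma_0(N)$. This contradiction proves the theorem.

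The step I expect to be the main obstacle is the transition in the second paragraph: upgrading the hypothetical relatively geometric action of $(\Gamma_0,\cP_0)$ to a relatively geometric action of a pair that is manifestly (virtually) K\"ahler. One must make relatively geometric Dehn filling cooperate with the toroidal compactification---in particular, confirming that the filled peripheral structure on $\Gamma_0(N)$ is $\{P/Z_P^{(N)}\}$, so that the peripherals are the expected virtually abelian groups, and that $\overline{X}_N$ is indeed a developable compact K\"ahler orbifold. The mechanism underlying the argument is that a relatively geometric action produces a relatively quasiconvex, codimension-one ``cut'' of $(\Gamma_0,\cP_0)$, which complex-hyperbolic rigidity in dimension $\geq 2$ forbids; one could instead run this mechanism directly on the quasi-projective variety $X$, through the theory of cuts of quasi-projective fundamental groups, and dispense with Dehn filling.
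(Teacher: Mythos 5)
Your proposal follows the same overall route as the paper: perform a Dehn filling that kills (a finite-index subgroup of) the centre of each cusp subgroup, use the relatively geometric Dehn-filling machinery to get a relatively geometric action of the quotient, recognize a finite-index subgroup of that quotient as a K\"ahler group via the toroidal compactification, and then derive a contradiction from Theorem~\ref{thm:NoKahlerAction} using the $\Z^{2n-2}$ sitting inside the filled peripherals. That is exactly the paper's strategy.

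Where you diverge, and where there is a real gap, is in how you produce the K\"ahler group. You fill at the index-$N$ subgroup $Z_P^{(N)}\leq Z_P$, identify $\Gamma_0(N)$ with $\pi_1^{\mathrm{orb}}(\overline{X}_N)$, and then assert that $\overline{X}_N$ is a developable compact K\"ahler orbifold, ``a finite quotient of the smooth toroidal compactification of a suitable finite cover of $X$.'' That last sentence is doing all the work and is not justified. To realize $\overline{X}_N$ as a global quotient of a smooth toroidal compactification $\mathcal{T}(X')$, one needs a normal finite-index $\Gamma_0'\unlhd\Gamma_0$ whose cusp subgroups have centres precisely the prescribed $Z_P^{(N)}$ (up to conjugacy) \emph{simultaneously for all cusps}, and one needs the deck action to extend over the boundary divisor; neither is automatic, and developability of the orbifold compactification (e.g.\ via an orbifold extension of the Hummel--Schroeder nonpositively curved metric) is also not in the literature in the form you need. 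You flag this transition yourself as the main obstacle, and indeed it is the step your argument does not actually cross.

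The paper sidesteps all of this by never leaving the manifold category. After arranging (Selberg plus Hummel's rotation-free theorem) that $\Gamma$ is torsion-free with smooth toroidal compactification, it fills along finite-index subgroups $Z_i\leq Z(P_i)$ to get $\psi\colon\Gamma\twoheadrightarrow\Delta$ acting relatively geometrically on $Y=X/K$. It then uses residual finiteness of $\Delta$ together with the fact that, for long fillings, all torsion in $\Delta$ is conjugate into the finitely many peripherals, to find a torsion-free finite-index $\Delta_0\leq\Delta$. The key observation is then purely group-theoretic: setting $\Gamma_0=\psi^{-1}(\Delta_0)$, torsion-freeness of $\Delta_0$ forces $K\cap P_{0,i}=Z(P_{0,i})$ for every cusp subgroup of $\Gamma_0$, so $\Delta_0$ is exactly $\pi_1(\mathcal{T}(M_0))$ for the smooth toroidal compactification of $M_0=\Hy^n_\C/\Gamma_0$, hence K\"ahler. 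In other words, instead of proving an orbifold is developable, the paper lets the Dehn-filling formalism pick out the finite cover for which the filling becomes the honest (manifold) toroidal filling. If you want to rescue your version, the cleanest fix is to adopt this ``pull back a torsion-free finite-index subgroup of the filled group'' step in place of the developability claim.
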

 
\begin{corollary}\label{cor:SSClassification}Let $\Gamma$ be a lattice in a non-compact semisimple Lie group $G$ without compact factors.  If either 
\begin{enumerate}
\item $\Gamma$ is uniform and cubulated hyperbolic, or
\item $\Gamma$ is non-uniform, hyperbolic relative to its cusp subgroups $\mathcal{P}$, and $(\Gamma,\cP)$ admits a relatively geometric action on a $\mathrm{CAT}(0)$ cube complex,
\end{enumerate}
then $G$ is commensurable to $\SO(n,1)$ for some $n\geq 1$.
\end{corollary}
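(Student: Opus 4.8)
The strategy is to reduce, in both cases, to the situation where $G$ is, up to isogeny, a simple Lie group of real rank one, and then to eliminate every such group except $\SO(n,1)$ using the three obstructions already in play: the theorem of Delzant--Gromov, Theorem~\ref{thm:NoRelativeAction}, and Kazhdan's property~(T).

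To reduce to real rank one: in case (1), $\Gamma$ acts geometrically on the symmetric space $G/K$ and is therefore quasi-isometric to it; since $\Gamma$ is Gromov hyperbolic, so is $G/K$, and a symmetric space of noncompact type is Gromov hyperbolic exactly when it has rank one, so (as $G$ has no compact factors) $G$ is almost simple of real rank one. In case (2), $\Gamma$ is hyperbolic relative to its cusp subgroups $\cP$, which are proper subgroups since $\Gamma$ is non-uniform; but a lattice in a semisimple Lie group without compact factors of real rank at least two is thick in the sense of Behrstock--Dru\c{t}u--Mosher, hence not hyperbolic relative to any collection of proper subgroups (for reducible lattices this is immediate, since a direct product of two infinite groups is never hyperbolic relative to a collection of proper subgroups), so again $G$ is almost simple of real rank one. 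Up to isogeny, $G$ is therefore one of $\SO(n,1)$, $\SU(n,1)$, $\Sp(n,1)$, or $F_4^{-20}$; since $\SU(1,1)$ and $\Sp(1,1)$ are isogenous to $\SO(2,1)$ and $\SO(4,1)$, it remains only to rule out $\SU(n,1)$ and $\Sp(n,1)$ for $n\geq 2$, and $F_4^{-20}$.

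Suppose $G$ is isogenous to $\Sp(n,1)$ with $n\geq 2$, or to $F_4^{-20}$. Then $\Gamma$ is infinite, non-amenable---hence not virtually nilpotent---and has property~(T). By Niblo--Reeves, every action of a property~(T) group on a $\mathrm{CAT}(0)$ cube complex has bounded orbits, hence fixes a point; so $\Gamma$ stabilizes the unique cube $\sigma$ whose interior contains that point, a finite-index subgroup of $\Gamma$ fixes a vertex $v$ of $\sigma$, and $\mathrm{Stab}_\Gamma(v)$ has finite index in $\Gamma$. In case (1) such an action cannot be proper, since $\Gamma$ is infinite. In case (2), $\mathrm{Stab}_\Gamma(v)$ is, by the definition of a relatively geometric action, either finite or conjugate into some $P\in\cP$; since the cusp subgroups of such a lattice are virtually nilpotent and $\mathrm{Stab}_\Gamma(v)$ has finite index in $\Gamma$, this forces $\Gamma$ to be virtually nilpotent, a contradiction. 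So $G$ is not isogenous to $\Sp(n,1)$ $(n\geq 2)$ or $F_4^{-20}$. Suppose instead $G$ is isogenous to $\SU(n,1)\cong\PU(n,1)$ with $n\geq 2$. In case (1), a finite-index subgroup of $\Gamma$ is isomorphic to a uniform lattice in $\PU(n,1)$, which is not cubulated by the Delzant--Gromov theorem recalled in the Introduction; but then $\Gamma$ itself is not cubulated, a contradiction. In case (2), after replacing $\Gamma$ by a torsion-free finite-index subgroup (which, with the induced peripheral structure, still admits a relatively geometric action on a $\mathrm{CAT}(0)$ cube complex) we may assume $\Gamma$ is a non-uniform lattice in $\PU(n,1)$ and $\cP$ is its collection of cusp subgroups, and Theorem~\ref{thm:NoRelativeAction} gives a contradiction. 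Hence $G$ is commensurable to $\SO(n,1)$ for some $n\geq 1$.

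The step I expect to be the main obstacle is the reduction to real rank one in case (2). In case (1) it follows from the elementary incompatibility of Gromov hyperbolicity with two-dimensional flats in the symmetric space; but in case (2) one needs the structural fact that no higher-rank lattice is hyperbolic relative to its cusp subgroups, which rests on the thickness---equivalently, the absence of cut points in asymptotic cones---of higher-rank lattices. Everything else is a matter of combining the rank-one classification with three obstructions, each of which is classical (property~(T) via Niblo--Reeves, and Delzant--Gromov) or is Theorem~\ref{thm:NoRelativeAction}.
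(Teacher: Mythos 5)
Your proposal is correct and follows essentially the same route as the paper: reduce to real rank one, classify the rank-one groups, eliminate $\Sp(n,1)$ and $F_4^{-20}$ via Property~(T) and Niblo--Reeves, eliminate $\PU(n,1)$ via Delzant--Gromov in the uniform case and Theorem~\ref{thm:NoRelativeAction} in the non-uniform case. The only genuine variation is in the reduction to rank one in case~(1): the paper invokes Behrstock--Dru\c{t}u--Mosher uniformly for both cases, whereas you use the more elementary quasi-isometry argument (uniform lattice is QI to the symmetric space, which is Gromov hyperbolic iff rank one) for case~(1) and reserve thickness for case~(2); both are valid, and your version of the Property~(T) step usefully spells out why a global fixed point is incompatible with a relatively geometric action, which the paper leaves implicit.
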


\begin{proof}  A uniform lattice (resp. non-uniform lattice) $\Gamma$ in a semisimple Lie group $G$ is hyperbolic (resp. hyperbolic relative to its cusp subgroups $\cP$) if and only if $G$ has rank 1, by a result of Behrstock--Dru\c{t}u--Mosher \cite{behrstock2009thick}. Any rank 1 noncompact semisimple Lie group is commensurable with one of $\SO(n,1)$, $\PU(n,1)$, $\Sp(n,1)$ for $n\geq 2$, or the isometry group of the octonionic hyperbolic plane $\Hy^2_\mathbb{O}$.  The latter and $\Sp(n,1)$ have Property (T), while $\SO(n,1)$ and  $\PU(n,1)$ do not. Hence if $\Gamma$ is commensurable with a lattice in $ \Sp(n,1)$ or $\Isom(\Hy^2_\mathbb{O})$, then $\Gamma$ has (T). 

By a result of Niblo--Reeves \cite{NibloReeves}, any action of a group with Property (T) on a $\mathrm{CAT}(0)$ cube complex has a global fixed point, so lattices in $\Sp(n,1)$ and $\Isom(\Hy^2_\mathbb{O})$  admit neither geometric nor relatively geometric actions on $\mathrm{CAT}(0)$ cube complexes. Hence if $\Gamma$ is as in the statement of the result, it must be commensurable to a lattice in either $\PU(n,1)$ or $\SO(n,1)$.  For $n\geq 2$, the uniform case of $\Gamma\leq \PU(n,1)$ is eliminated by work Delzant--Gromov \cite{delzant2005cuts}. The corollary now follows from Theorem \ref{thm:NoRelativeAction}.  
\end{proof}

We say that a relatively hyperbolic group pair $(\Gamma,\cP)$ is \emph{properly} relatively hyperbolic if $\cP \ne \{ \Gamma \}$. The following result considers general relatively geometric actions of K\"ahler relatively hyperbolic groups on $\mathrm{CAT}(0)$ cube complexes (when the peripheral subgroups are residually finite).

\begin{theorem}\label{thm:NoKahlerAction}
	Let $(\Gamma,\mathcal P)$ be a properly relatively hyperbolic pair such that each element of $\mathcal{P}$ is residually finite. If $\Gamma$ is K\"ahler and acts relatively geometrically on a $\mathrm{CAT}(0)$ cube complex, then $\Gamma$ is virtually a hyperbolic surface group.
\end{theorem}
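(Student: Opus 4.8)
The plan is to reduce to the hyperbolic case via a careful analysis of the structure of $\Gamma$ and then invoke the Delzant--Gromov theorem on cubulated hyperbolic K\"ahler groups. First I would use the hypothesis that $\Gamma$ acts relatively geometrically on a $\mathrm{CAT}(0)$ cube complex $X$, together with residual finiteness of the peripherals, to obtain a finite-index subgroup $\Gamma' \le \Gamma$ acting specially (in the sense of Haglund--Wise) on a cube complex; this should follow from the relative special machinery of Einstein--Groves (and the hierarchy/combination results they build on), applied with the residual finiteness of the elements of $\cP$ to control the parabolic cosets. Passing to a finite-index subgroup is harmless since being K\"ahler and being virtually a surface group are both commensurability invariants, and $\Gamma'$ is still properly relatively hyperbolic (relative to the induced peripheral structure $\cP'$), with $\cP' \ne \{\Gamma'\}$.

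The next step is to show that $\Gamma'$ (hence $\Gamma$) is in fact \emph{hyperbolic}, not merely relatively hyperbolic. Here is where the K\"ahler hypothesis does real work: a properly relatively hyperbolic K\"ahler group whose peripheral subgroups are infinite must have peripherals that are themselves constrained. I would argue that if some $P \in \cP'$ is infinite, then one can produce a homomorphism from $\Gamma'$ to a surface or curve-type group, or use the fact that an infinite-index relatively quasiconvex subgroup of a cubulated relatively hyperbolic group is itself virtually special and hence residually finite with good separability, and then combine this with restrictions coming from K\"ahler geometry (e.g.\ the Delzant--Gromov theory of cuts, or results on the BNS/$\Sigma$-invariants and fibering of K\"ahler groups) to conclude the peripheral subgroups must be finite. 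Once the peripherals are finite, $\Gamma'$ is word-hyperbolic, and it acts properly cocompactly on $X$ (the relatively geometric action becomes genuinely geometric when the parabolics are finite), so $\Gamma'$ is a cubulated hyperbolic K\"ahler group; by Delzant--Gromov \cite{delzant2005cuts}, $\Gamma'$ is commensurable to a surface group of genus $\ge 2$, and therefore $\Gamma$ is virtually a hyperbolic surface group.

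The main obstacle, and the step I expect to require the most care, is ruling out infinite peripheral subgroups. Unlike the lattice case of Theorem~\ref{thm:NoRelativeAction}, where the peripherals are explicitly virtually nilpotent and one has the rigidity of complex hyperbolic cusps to exploit, here $\cP$ is arbitrary, so the argument must be purely group-theoretic and must leverage the K\"ahler condition in an essential way. I anticipate this goes through a dichotomy for K\"ahler groups that are properly relatively hyperbolic: either $\Gamma$ virtually surjects onto the fundamental group of a hyperbolic $2$-orbifold (the ``fibered'' case) or it does not, and one shows the relatively geometric cubulation is incompatible with a nontrivial infinite peripheral in either branch --- in the fibered case by pulling back the peripheral structure to the orbifold group (whose peripherals are finite or cyclic, forcing rigidity), and in the non-fibered case by invoking Gromov--Schoen / Delzant--Gromov factorization results to show $\Gamma$ cannot split appropriately over the peripherals unless they are finite. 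Making this dichotomy precise and checking that the relatively special structure survives each reduction is the crux.
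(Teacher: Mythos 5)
Your proposal takes a genuinely different route from the paper, and it has a gap at exactly the point you flag as ``the crux.''  You try to reduce to the hyperbolic case by arguing that the peripheral subgroups must be \emph{finite}, so that the relatively geometric action is actually geometric and Delzant--Gromov on cubulated hyperbolic K\"ahler groups applies.  But you never establish this claim --- you only sketch a dichotomy that you say ``should'' rule out infinite peripherals --- and the claim is neither proved nor needed.  The conclusion ``$\Gamma$ is virtually a hyperbolic surface group'' does not by itself force the peripherals to be finite (a surface group can be properly relatively hyperbolic with respect to infinite cyclic, e.g.\ malnormal quasiconvex cyclic, peripherals), so this would require an additional and rather delicate argument that is absent from your sketch.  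As written, the proof does not close.

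The paper sidesteps the peripheral question entirely.  After passing to a finite-index torsion-free subgroup $\Gamma_0$ with $X/\Gamma_0$ special, one cuts along an embedded essential two-sided hyperplane in $X/\Gamma_0$ to produce a splitting of $\Gamma_0$ over a hyperplane stabilizer, which is relatively quasiconvex and of infinite index.  The action of $\Gamma_0$ on the resulting Bass--Serre tree therefore has finite kernel (a normal subgroup inside an infinite-index relatively quasiconvex subgroup is finite).  Now the K\"ahler hypothesis enters not through Delzant--Gromov but through the Gromov--Schoen theorem on K\"ahler groups acting on trees: the $\Gamma_0$-action on the tree factors, with finite kernel, through a surjection onto a cocompact Fuchsian group.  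Hence $\Gamma_0$ --- and thus $\Gamma$ --- is virtually a hyperbolic surface group.  No reduction to the word-hyperbolic case is needed, and no finiteness of peripherals is invoked.  Your passing mention of Gromov--Schoen in the ``non-fibered'' branch of your dichotomy is pointing at the right tool, but the paper applies it directly once a tree action with finite kernel is in hand, rather than first trying to collapse the relatively hyperbolic structure to a hyperbolic one.
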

We will deduce Theorem \ref{thm:NoRelativeAction} from Theorem \ref{thm:NoKahlerAction} in Section \ref{sec:No Lattice Action}. In fact, non-uniform lattices in $\PU(n,1)$ are K\"ahler for $n\geq 3$ \cite{Toledo}, hence Theorem \ref{thm:NoRelativeAction} follows immediately from Theorem \ref{thm:NoKahlerAction} in this range. However, our proof of Theorem \ref{thm:NoRelativeAction} will work for all $n\geq 2$, and will not use this fact. In \cite{DelzantPy}, Delzant--Py considered actions of K\"ahler groups on locally finite, finite-dimensional $\mathrm{CAT}(0)$ cube complexes that are more general than geometric ones (see Theorem A for precise hypotheses), and showed that every such action virtually factors through a surface group. We remark that the cube complexes appearing in relatively geometric actions will in general not be locally finite. 

We conclude the introduction with a sample application of Theorem~\ref{thm:NoKahlerAction}.

\begin{corollary}
    Suppose that $A$ and $B$ are infinite residually finite groups which are not virtually free.  Any $C'\left(\frac{1}{6}\right)$--small cancellation quotient of $A\ast B$ is not K\"ahler.
\end{corollary}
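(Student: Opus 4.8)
The plan is to show that such a quotient $G$ is a properly relatively hyperbolic pair with residually finite peripheral subgroups that acts relatively geometrically on a $\mathrm{CAT}(0)$ cube complex; if $G$ were K\"ahler, then Theorem~\ref{thm:NoKahlerAction} would force $G$ to be virtually a hyperbolic surface group, which the hypothesis that $A$ and $B$ are not virtually free rules out.

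Concretely, write $G = (A\ast B)/\langle\langle R\rangle\rangle$, where $R$ is a set of cyclically reduced words satisfying the metric $C'(1/6)$ condition over the free product, and let $\bar A,\bar B\le G$ denote the images of the free factors. By classical small cancellation theory over free products (a Greendlinger-type lemma), the natural maps $A\to\bar A$ and $B\to\bar B$ are isomorphisms, $\bar A\cap\bar B=\{1\}$, and $(G,\{\bar A,\bar B\})$ is relatively hyperbolic. Since $\bar A$ and $\bar B$ are infinite and meet trivially, neither of them is all of $G$, so with $\mathcal P:=\{\bar A,\bar B\}$ the pair $(G,\mathcal P)$ is \emph{properly} relatively hyperbolic, and its peripheral subgroups are residually finite by assumption.

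Next I would appeal to the relative cubulation of $C'(1/6)$ quotients of free products of residually finite groups: $(G,\mathcal P)$ admits a relatively geometric action on a $\mathrm{CAT}(0)$ cube complex. (When $R=\varnothing$ this is just the action of $A\ast B$ on its Bass--Serre tree; in general the walls come from the small cancellation relators, in the spirit of Wise's cubulation of $C'(1/6)$ groups \cite{wise2012structure} and the associated combination theorems, with residual finiteness of $A$ and $B$ used to upgrade the wall data to an honest cubulation on which the free factors act elliptically -- so the factors themselves need not be cubulated.) Granting this, $(G,\mathcal P)$ meets every hypothesis of Theorem~\ref{thm:NoKahlerAction} except, possibly, K\"ahlerness. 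This cubulation step, which requires the small-cancellation-over-free-products machinery in the relatively geometric framework of \cite{einstein2020relative} together with the bookkeeping that the action is cocompact relative to $\mathcal P$ with the free factors elliptic, is the step I expect to be the main obstacle.

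Finally, suppose for contradiction that $G$ is K\"ahler. Theorem~\ref{thm:NoKahlerAction} then provides a finite-index subgroup $S\le G$ with $S\cong\pi_1(\Sigma)$ for some closed hyperbolic surface $\Sigma$. The intersection $\bar A\cap S$ has finite index in $\bar A\cong A$, hence is not virtually free and in particular not free; since every infinite-index subgroup of $\pi_1(\Sigma)$ is free, $\bar A\cap S$ has finite index in $S$, and therefore $\bar A$ has finite index in $G$. By symmetry $[G:\bar B]<\infty$ as well, so $\bar A\cap\bar B$ has finite index in $G$ and is thus infinite (as $G\supseteq\bar A\cong A$ is infinite); this contradicts $\bar A\cap\bar B=\{1\}$. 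Hence no $C'(1/6)$ quotient of $A\ast B$ is K\"ahler.
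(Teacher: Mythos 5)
Your overall strategy is exactly the paper's: establish that $(G,\{\bar A,\bar B\})$ is properly relatively hyperbolic with residually finite peripherals, produce a relatively geometric action on a $\mathrm{CAT}(0)$ cube complex, invoke Theorem~\ref{thm:NoKahlerAction}, and then rule out the virtual-surface-group conclusion using the hypothesis that $A$ is not virtually free.

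The step you flag as ``the main obstacle'' -- the relative cubulation of the small cancellation quotient -- is in fact where your argument is incomplete, and you correctly sensed it. The sketch you give (walls dual to relators in the spirit of Wise's $C'(1/6)$ cubulation, with residual finiteness of the factors used to pass from a wallspace to an honest relatively geometric action with the factors elliptic) is the right picture, but there is real work in making the action cocompact with the correct stabilizer structure of Definition~\ref{def:drg}, and in verifying relative hyperbolicity for free-product small cancellation. The paper simply cites Einstein--Ng \cite{einstein-ng}, who carry out exactly this: for $C'(1/6)$ quotients of free products of residually finite groups, the quotient is residually finite and admits a relatively geometric action on a $\mathrm{CAT}(0)$ cube complex relative to the (images of the) free factors. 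Without that reference or an equivalent construction, your proof does not close.

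Your final contradiction is a valid variant of the paper's, though slightly more roundabout. The paper observes directly that $A$ embeds as an infinite-index subgroup of $G$ (since $(G,\mathcal P)$ is \emph{properly} relatively hyperbolic, the peripherals have infinite index), and infinite-index subgroups of virtual surface groups are virtually free -- contradiction. You instead push $\bar A\cap S$ to finite index in $S$, deduce $[G:\bar A]<\infty$ and $[G:\bar B]<\infty$, and contradict $\bar A\cap\bar B=\{1\}$. Both work; the paper's is cleaner because it only needs one factor and does not need the trivial-intersection fact. Either way, the essential external input you were missing is the Einstein--Ng relative cubulation theorem.
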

\begin{proof}
    Let $\Gamma$ be such a small cancellation quotient of $A \ast B$.  According to \cite{einstein-ng}, $\Gamma$ is residually finite and admits a relatively geometric action on a $\mathrm{CAT}(0)$ cube complex.  If $\Gamma$ were K\"ahler, it would be a virtual surface group, by Theorem~\ref{thm:NoKahlerAction}.  However, $A$ embeds in $\Gamma$ as an infinite-index subgroup, and the only infinite index subgroups of virtual surface groups are virtually free.
\end{proof}

\noindent
\textbf{Outline:} In Section \ref{sec:ReviewActions}, we review the definition of a relatively geometric action of a group pair on a $\mathrm{CAT}(0)$ cube complex and the notion of group-theoretic Dehn fillings, then collect some known results about these. In Section~\ref{sec:ending} we prove Theorem~\ref{thm:NoKahlerAction}. In Section \ref{sec:No Lattice Action}, after reviewing the Borel--Serre and toroidal compactifications of non-uniform quotients of complex hyperbolic space, we prove Theorem \ref{thm:NoRelativeAction}.\\

\noindent\textbf{Acknowledgments:} The first author was supported by NSF grant DMS-2052801. The second author was supported by NSF grants DMS-1904913 and DMS-2203343.  The third author would like to thank his advisor, Daniel Groves, for introducing him to the subject and answering his questions. He would like to thank his co-advisor, Anatoly Libgober, for his constant support and warm encouragement. He would also like thank Hao Liang and Xuzhi (Carl) Tang for helpful discussions.\\

\section{Actions on $\mathrm{CAT}(0)$ Cube Complexes}\label{sec:ReviewActions}
In this section, we review the notion of a relatively geometric action of a group pair $(\Gamma,\mc{P})$ on a $\mathrm{CAT}(0)$ cube complex, defined by Einstein and Groves in \cite{einstein2020relative}. We then introduce Dehn fillings of group pairs and recall some useful results from \cite{einstein2022relatively}.  

\begin{definition}\label{def:drg}
	Let $\Gamma$ be a group and $\cP$ a collection of subgroups of $\Gamma$. An action of $\Gamma$ on a $\mathrm{CAT}(0)$ cube complex $X$ is \emph{relatively geometric with respect to $\mathcal P$} if
    \begin{enumerate}
        \item $\leftQ{X}{\Gamma}$ is compact;
        \item Each element of $\mathcal P$ acts elliptically on $X$;
        \item Each cell stabilizer in $X$ is either finite or else conjugate to a finite-index subgroup of an element of $\mathcal P$.
    \end{enumerate}
\end{definition}

Recall that if $(\Gamma,\cP)$ is a relatively hyperbolic group pair and $\Gamma_0\leq \Gamma$ has finite-index then $(\Gamma_0,\cP_0)$ is also a relatively hyperbolic group pair, where $\cP_0$ is the set of representatives of the $\Gamma_0$--conjugacy classes of\begin{equation}\label{eqn:FIPeripherals}\left\{P^g\cap\Gamma_0 \mid g\in \Gamma, P\in \mathcal P\right\}\end{equation}

Since $[\Gamma\co \Gamma_0]$ is finite, $\cP_0$ is still a finite collection of subgroups.  It follows that  if $\Gamma$ admits a relatively geometric action on a $\mathrm{CAT}(0)$ cube complex $X$, then $(\Gamma_0,\cP_0)$ also admits a relatively geometric action on $X$ by restriction.  Indeed, (2) and (3) in Definition \ref{def:drg} follow immediately and $(1)$ follows from that fact that under the natural map $\leftQ{X}{\Gamma_0} \rightarrow \leftQ{X}{\Gamma}$, each cell of $\leftQ{X}{\Gamma}$ has at most $[\Gamma\co \Gamma_0]<\infty$ pre-images. Hence if $\leftQ{X}{\Gamma}$ is compact, so is $\leftQ{X}{\Gamma_0}$.  We have just proven 

\begin{lemma}\label{lem:Finite Index Action} Let $\Gamma_0\leq \Gamma$ be a finite-index subgroup.  If $(\Gamma,\cP)$ has a relatively geometric action on a $\mathrm{CAT}(0)$ cube complex $X$, then the restriction of this action to $(\Gamma_0,\cP_0)$ is also relatively geometric, where $\cP_0$ is defined as in Equation \ref{eqn:FIPeripherals}.
\end{lemma}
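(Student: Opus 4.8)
The plan is to verify directly that the restricted $\Gamma_0$--action on $X$ satisfies the three conditions of Definition~\ref{def:drg} with respect to the collection $\cP_0$ defined in Equation~\eqref{eqn:FIPeripherals}. As recalled just above the statement, $(\Gamma_0,\cP_0)$ is again a relatively hyperbolic pair, and since $[\Gamma:\Gamma_0]<\infty$ the set $\cP_0$ is a finite collection of subgroups, so it is meaningful to ask whether the restricted action is relatively geometric with respect to $\cP_0$. Condition (2) is immediate: each $P_0\in\cP_0$ is contained in some $P^g\cap\Gamma_0$ with $P\in\cP$, $g\in\Gamma$; since $P$ fixes a point of $X$, its conjugate $P^g$ fixes a point, and hence so does the subgroup $P_0$.

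Next I would check condition (3). A cell stabilizer for the $\Gamma_0$--action has the form $\mathrm{Stab}_\Gamma(\sigma)\cap\Gamma_0$ for a cell $\sigma$ of $X$. If $\mathrm{Stab}_\Gamma(\sigma)$ is finite, so is this intersection. Otherwise $\mathrm{Stab}_\Gamma(\sigma)$ is a finite-index subgroup of some $P^g$ with $P\in\cP$; because $\Gamma_0$ has finite index in $\Gamma$, the group $\mathrm{Stab}_\Gamma(\sigma)\cap\Gamma_0$ has finite index in $\mathrm{Stab}_\Gamma(\sigma)$, hence finite index in $P^g$, hence finite index in $P^g\cap\Gamma_0$. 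By the definition of $\cP_0$, the subgroup $P^g\cap\Gamma_0$ is $\Gamma_0$--conjugate to an element of $\cP_0$, so $\mathrm{Stab}_\Gamma(\sigma)\cap\Gamma_0$ is conjugate in $\Gamma_0$ to a finite-index subgroup of an element of $\cP_0$, as required.

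Finally, for condition (1) I would use that the natural quotient map $\leftQ{X}{\Gamma_0}\to\leftQ{X}{\Gamma}$ is cellular and that each (open) cell of $\leftQ{X}{\Gamma}$ has at most $[\Gamma:\Gamma_0]$ preimages, since these preimages correspond to the $\Gamma_0$--orbits inside a single $\Gamma$--orbit of cells. Compactness of $\leftQ{X}{\Gamma}$ gives that it has finitely many cells, hence $\leftQ{X}{\Gamma_0}$ has finitely many cells and is therefore compact as well.

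All three verifications are routine; the only place requiring a little care is the bookkeeping in condition (3), namely keeping the ``finite-index-in'' and ``conjugate-to'' relations straight and using that the defining family for $\cP_0$ in Equation~\eqref{eqn:FIPeripherals} ranges over all $\Gamma$--conjugates $P^g$ rather than over the subgroups $P\in\cP$ alone. I do not anticipate any substantive obstacle.
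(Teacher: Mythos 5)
Your proof is correct and follows essentially the same route as the paper's: the paper also deduces condition (1) from the fact that each cell of $\leftQ{X}{\Gamma}$ has at most $[\Gamma:\Gamma_0]$ preimages in $\leftQ{X}{\Gamma_0}$, and treats conditions (2) and (3) as immediate from the definitions. You have simply written out the bookkeeping for (2) and (3) that the paper leaves implicit, and that bookkeeping is accurate.
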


\subsection{Dehn fillings} 
\quad\\
Dehn fillings first appeared in the context of 3-manifold topology and were subsequently generalized to the group-theoretic setting by Osin \cite{osin2007peripheral} and Groves--Manning \cite{GrovesManningDehn}. We now recall the notion of a Dehn filling of a group pair $(G,\mathcal{P})$:

\begin{definition}[Dehn Filling]
 Given a group pair $(G,\mathcal P)$, where $\mathcal P = \{P_1, ..., P_m\}$ and a choice of normal subgroups of peripheral groups $\mathcal{N}=\{N_i\unlhd P_i\}$,  the \emph{Dehn filling} of $(G,\mathcal P)$ with respect to $\mathcal{N}$ is the pair $(\overline{G},\overline{\mathcal P})$ where $\overline{G}=G/K$ and $K=\llangle \cup N_i\rrangle$ is the normal closure in $G$ of the group generated by $\{\cup_i N_i\}$ and $\bar{\mathcal P}$ is the set of images of $\mathcal P$ under this quotient. The $N_i$ are called the \emph{filling kernels}. When we want to specify the filling kernels we write $G(N_1,\ldots, N_m)$ for the quotient $\overline{G}$.
\end{definition}

\begin{definition}[Peripherally finite]
	 If each normal subgroup $N_i$ has finite-index in $P_i$, the filling is said to be \emph{peripherally finite}. 
\end{definition}

\begin{definition}[Sufficiently long]
	 We say that a property $\mathcal X$ holds for all sufficiently long Dehn fillings of $(G,\mathcal P)$ if there is a finite subset $B\subset G\smallsetminus \{ 1 \}$ so that whenever $N_i \cap B =\emptyset$ for all $i$, the corresponding Dehn filling $G(N_1,..., N_n)$ has property $\mathcal X$.
\end{definition}

The proof of the next theorem relies on the notion of a $\mathcal{Q}$--filling of a collection of subgroups $\mc{Q}$ of $G$.  Recall from \cite{groves2018hyperbolic} that given a subgroup $Q<G$, the quotient $G(N_1,\ldots,N_m)$ is a \emph{$Q$--filling} if for all $g \in G$, and $P_i\in \cP$,  $|Q \cap P_i^g|=\infty$ implies $N_i^g\subseteq Q$. If $\mc{Q}=\{Q_1,\ldots, Q_l\}$ is a family of subgroups, then $G(N_1,\ldots, N_m)$ is a \emph{$\mc{Q}$--filling} if it is a $Q$--filling for every $Q\in \mc{Q}$.

Let $\mathcal{Q}$ be a collection of finite-index subgroups of elements of $\mathcal{P}$ so that any infinite cell stabilizer contains a conjugate of an element of $\mathcal{Q}$. The following is proved in \cite{einstein2022relatively}.

\begin{theorem}[Proposition 4.1 and Corollary 4.2 of \cite{einstein2022relatively}] \label{thm:DehnFilling}Let $(\Gamma,\cP)$ be a relatively hyperbolic pair such that the elements of $\mathcal{P}$ are residually finite.   If $(\Gamma,\cP)$ admits a relatively geometric action on a $\mathrm{CAT}(0)$ cube complex $X$ then 
\begin{enumerate}
    \item\label{item:quotientCCC} For sufficiently long $\mathcal{Q}$--fillings $\Gamma\rightarrow \overline{\Gamma}=\Gamma/K$, the quotient $\overline{X}=\leftQ{X}{K}$ is a $\mathrm{CAT}(0)$ cube complex; and
    \item\label{item:hypQ} Any sufficiently long, peripherally finite $\mathcal{Q}$--filling of $\Gamma$ is hyperbolic and virtually special. 
\end{enumerate}
\end{theorem}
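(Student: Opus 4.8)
The plan is to prove (1) by showing directly that $\overline X = X/K$, with $K = \llangle\cup N_i\rrangle$, is nonpositively curved and simply connected, and then to deduce (2) from (1) using the group-theoretic Dehn filling theorem together with Agol's theorem that cubulated hyperbolic groups are virtually special. The inputs I would draw on are the structural facts that, for a relatively geometric action, every cell stabilizer of $X$ is finite or finite-index in a conjugate of an element of $\cP$ (part of Definition~\ref{def:drg}) and every hyperplane stabilizer is relatively quasiconvex in $(\Gamma,\cP)$ \cite{einstein2020relative}, along with the Dehn filling theorems of Osin and Groves--Manning: sufficiently long fillings are injective on any prescribed finite set, preserve relative hyperbolicity, and carry relatively quasiconvex subgroups to quasiconvex subgroups \cite{osin2007peripheral, GrovesManningDehn, groves2018hyperbolic}.

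Simple connectivity is the easy half. Since each $P_i$ acts elliptically, every conjugate $N_i^g$ consists of elliptic isometries, so $K$ is generated by elliptic elements. As $X$ is simply connected, the orbit map induces a surjection $K \twoheadrightarrow \pi_1(\overline X)$ (a loop at a basepoint $x_0$ lifts to a path from $x_0$ to some $kx_0$), and a direct check shows this homomorphism sends any element with a fixed point in $X$ to the trivial element. Hence it kills $\langle\langle\,\text{elliptics}\,\rangle\rangle \supseteq K$, so $\pi_1(\overline X) = 1$.

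Nonpositive curvature is the main obstacle, and it is exactly here that ``sufficiently long'' and ``$\mathcal Q$-filling'' are needed. One must exclude the standard pathologies in a quotient of a $\mathrm{CAT}(0)$ cube complex: an element of $K$ folding a cube, an element of $K$ sending a vertex to an adjacent vertex, and a vertex stabilizer acting on its link so that the quotient link fails to be flag. Since $X$ is canonically the dual cube complex of the wallspace of its hyperplanes, I would push that wallspace forward through the filling --- the hyperplane stabilizers, being relatively quasiconvex, descend to quasiconvex subgroups of $\overline\Gamma$ for long fillings, and one verifies that the resulting family of walls is a genuine wallspace carrying a cocompact $\overline\Gamma$-action --- and then identify $\overline X = X/K$ with the associated dual cube complex, which is automatically $\mathrm{CAT}(0)$ by Sageev's construction. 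I expect this identification, and the verification that no pair of previously disjoint or crossing hyperplanes becomes ill-behaved after filling, to be the technical heart of the argument; this is the step where the $\mathcal Q$-filling hypothesis genuinely bites, since it forces the $K$-stabilizer of any cell with infinite $\Gamma$-stabilizer to contain (indeed to be, up to finite index in that stabilizer) a conjugate of a filling kernel, which is what makes the identifications near the parabolic points the ``expected'' ones. Granting this, $\overline X$ is an NPC cube complex, and with the previous paragraph it is a $\mathrm{CAT}(0)$ cube complex, which is (1).

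For (2), assume in addition the filling is peripherally finite and sufficiently long. By the Dehn filling theorem, $\overline\Gamma$ is hyperbolic relative to $\overline\cP = \{P_i/N_i\}$, and since each $N_i$ has finite index in $P_i$ these images are finite, so $\overline\Gamma$ is word-hyperbolic. The residual action of $\overline\Gamma = \Gamma/K$ on $\overline X$ is cocompact because $X/\Gamma$ is, and the stabilizer in $\overline\Gamma$ of a cell $\bar c$ is $\mathrm{Stab}_\Gamma(c)/(\mathrm{Stab}_\Gamma(c) \cap K)$; this is finite when $\mathrm{Stab}_\Gamma(c)$ is finite, and when $\mathrm{Stab}_\Gamma(c)$ is finite-index in a conjugate $P_i^g$ the $\mathcal Q$-filling property places $N_i^g$ inside $\mathrm{Stab}_\Gamma(c)\cap K$, so the quotient is again finite. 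Thus $\overline\Gamma$ acts cocompactly on the $\mathrm{CAT}(0)$ cube complex $\overline X$ with finite cell stabilizers, so the quotient is a finite complex, the action is proper, and $\overline\Gamma$ is cubulated; being hyperbolic, it is virtually special by Agol \cite{AgolHaken}. The difficulty is concentrated in the nonpositive-curvature step; the remainder is bookkeeping with the standard Dehn filling and cubulation machinery.
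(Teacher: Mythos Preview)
The paper does not prove this statement; it is quoted verbatim as Proposition~4.1 and Corollary~4.2 of \cite{einstein2022relatively} and used as a black box. There is therefore no ``paper's own proof'' to compare your proposal against.

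That said, your outline is a reasonable sketch of how the cited result is established, and the overall shape---simple connectivity via Armstrong-type reasoning, nonpositive curvature as the substantive step requiring the $\mathcal Q$--filling hypothesis, and then Agol for part~(2)---is correct in spirit. One technical point worth flagging: your computation of cell stabilizers in $\overline X$ as $\mathrm{Stab}_\Gamma(c)/(\mathrm{Stab}_\Gamma(c)\cap K)$ is not automatic, since the $\overline\Gamma$--stabilizer of $\bar c$ is the image of the setwise stabilizer of the full $K$--orbit of $c$, which could in principle be larger; ruling this out is part of what the ``sufficiently long $\mathcal Q$--filling'' hypothesis buys, and in the cited paper this is handled together with the link analysis rather than separately. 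Your acknowledgment that the nonpositive-curvature verification is the technical heart is accurate, and the wallspace-pushforward idea you gesture at is close to what is actually done, though the details are more delicate than your sketch suggests.
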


The following result is implicit in \cite{einstein2022relatively}.  For completeness, we provide a proof.
\begin{lemma}  \label{lem:quotientRG}
In the context of Theorem~\ref{thm:DehnFilling}.\eqref{item:quotientCCC}, the action of $\overline{\Gamma}$ on $\overline{X}$ is relatively geometric.
\end{lemma}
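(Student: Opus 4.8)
The plan is to verify the three conditions of Definition~\ref{def:drg} directly for the action of $\overline{\Gamma} = \Gamma/K$ on $\overline{X} = \leftQ{X}{K}$, using the fact (from Theorem~\ref{thm:DehnFilling}.\eqref{item:quotientCCC}) that $\overline X$ is already known to be a $\mathrm{CAT}(0)$ cube complex for sufficiently long $\mathcal{Q}$--fillings. The peripheral structure on $\overline\Gamma$ is $\overline{\cP} = \{\,\overline{P_i} = P_i/N_i\,\}$, the images of the $P_i$.

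\textbf{Cocompactness.} The quotient of $\overline X$ by $\overline\Gamma$ equals the quotient of $X$ by the full preimage of $\overline\Gamma$ in the $\Gamma$--action, which is just $\Gamma$ itself; so $\leftQ{\overline X}{\overline\Gamma} = \leftQ{X}{\Gamma}$, which is compact by hypothesis. This gives condition (1) immediately.

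\textbf{Peripheral elements act elliptically.} Each $\overline{P_i}$ is the image of $P_i$, which acts elliptically on $X$ by condition (2) for the original action; since $\overline X$ is a quotient of $X$ by the normal subgroup $K$, the image of a fixed point of $P_i$ in $X$ is fixed by $\overline{P_i}$ in $\overline X$. Strictly, I should note that $P_i$ fixes a point of $X$ (elliptic on a $\mathrm{CAT}(0)$ cube complex means fixing a vertex, or at least a cell whose barycenter is then fixed), and that its image in $\overline X$ is fixed by $\overline{P_i}$. This gives condition (2).

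\textbf{Cell stabilizers.} This is the main point and the step I expect to require the most care. A cell of $\overline X$ is the image of a cell $\sigma$ of $X$, and the $\overline\Gamma$--stabilizer of $\overline\sigma$ is the image in $\overline\Gamma$ of the subgroup $\{g \in \Gamma : g\sigma' \in K\sigma' \text{ for some/the relevant cell}\}$; more precisely it is the image of $\mathrm{Stab}_\Gamma(K\sigma)$, the stabilizer of the $K$--orbit of $\sigma$. Here one uses that the $\mathcal{Q}$--filling is sufficiently long: by the construction in \cite{einstein2022relatively} (which underlies Theorem~\ref{thm:DehnFilling}), for a sufficiently long $\mathcal{Q}$--filling $K$ acts freely on the cells of $X$, or at least $\mathrm{Stab}_\Gamma(K\sigma)$ is generated by $\mathrm{Stab}_\Gamma(\sigma)$ together with $K\cap \mathrm{Stab}_\Gamma(K\sigma)$ in a controlled way, so that the image stabilizer in $\overline\Gamma$ is the image of $\mathrm{Stab}_\Gamma(\sigma)$. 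Granting this, condition (3) for the original action says $\mathrm{Stab}_\Gamma(\sigma)$ is finite or conjugate into a finite-index subgroup of some $P_i$; its image in $\overline\Gamma$ is then either finite, or the image of a finite-index subgroup of a conjugate of $P_i$. In the latter case, since the $\mathcal{Q}$--filling condition forces $N_i^g \subseteq \mathrm{Stab}_\Gamma(\sigma)$ whenever $\mathrm{Stab}_\Gamma(\sigma)\cap P_i^g$ is infinite, the image of $\mathrm{Stab}_\Gamma(\sigma)$ has finite index in $\overline{P_i}^{\bar g} = (P_i/N_i)^{\bar g}$; if instead $\mathrm{Stab}_\Gamma(\sigma)\cap P_i^g$ is finite, then $\mathrm{Stab}_\Gamma(\sigma)$ itself was finite and we are in the first case. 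Thus every cell stabilizer in $\overline X$ is finite or conjugate to a finite-index subgroup of an element of $\overline{\cP}$, establishing condition (3).

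The essential obstacle is controlling $K$--stabilizers of cells, i.e.\ showing that passing to the quotient by $K$ does not create unexpectedly large cell stabilizers and that the only infinite ones that survive come from the peripheral subgroups via the $\mathcal{Q}$--filling hypothesis. This is exactly where ``sufficiently long'' is used and where one must invoke the internal structure of the proof of Theorem~\ref{thm:DehnFilling} from \cite{einstein2022relatively}; once that is in hand, conditions (1) and (2) are routine and (3) follows by the index bookkeeping sketched above.
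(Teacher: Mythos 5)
Your proof is correct and follows essentially the same route as the paper's (verify the three conditions of Definition~\ref{def:drg}), but it identifies a worry that is in fact not there and, in chasing it, invokes machinery the argument does not need.

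The ``essential obstacle'' you single out --- that passing to the quotient by $K$ might create unexpectedly large cell stabilizers --- is a non-issue, and it has nothing to do with the filling being sufficiently long or with $K$ acting freely (it does not act freely: infinite cell stabilizers of $\Gamma \curvearrowright X$ are commensurable to conjugates of peripherals, and $K$ meets these). The relevant fact is completely elementary: for any group $\Gamma$ acting on any set $S$ and any normal subgroup $K \trianglelefteq \Gamma$, the stabilizer in $\Gamma/K$ of a $K$--orbit $K\sigma$ is exactly $K\cdot\mathrm{Stab}_\Gamma(\sigma)/K$, i.e.\ the image of $\mathrm{Stab}_\Gamma(\sigma)$ under $\Gamma\to\Gamma/K$. (If $gK\sigma = K\sigma$ then $g\sigma = k\sigma$ for some $k\in K$, so $g\in K\cdot\mathrm{Stab}_\Gamma(\sigma)$; the converse uses normality of $K$.) No appeal to the internal structure of \cite{einstein2022relatively} is required here. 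The ``sufficiently long'' hypothesis is used only to guarantee that $\overline{X}$ is $\mathrm{CAT}(0)$, which is already given in the context of the lemma.

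With that identity in hand, condition (3) is immediate and you do not need the $\mathcal{Q}$--filling bookkeeping at the end: if $\mathrm{Stab}_\Gamma(\sigma)$ is finite, its image is finite; if $\mathrm{Stab}_\Gamma(\sigma)$ is conjugate to a finite-index subgroup $Q \leq P_i$, then the image of $\mathrm{Stab}_\Gamma(\sigma)$ is conjugate to the image of $Q$, which automatically has finite index in the image of $P_i$ (a surjection cannot increase index), and the image of $P_i$ is by definition the corresponding element of $\overline{\cP}$. This is exactly the one-line argument the paper gives. Your conditions (1) and (2) are fine and match the paper. So: correct conclusion, same overall structure, but the cell-stabilizer step is simpler than you feared.
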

\begin{proof}
Since $\leftQ{\overline{X}}{\overline{\Gamma}} = \leftQ{X}{\Gamma}$ the action is cocompact.  Let $\overline{\cP}$ be the induced peripheral structure on $\overline{\Gamma}$ (the image of $\cP$).  The fact that elements of $\overline{\cP}$ act elliptically on $\overline{X}$ follows from the fact that elements of $\mathcal{P}$ act elliptically on $X.$ Because each cell-stabilizer of $\Gamma\curvearrowright X$ is either finite or conjugate to a finite-index of subgroup of some $P_i\in \mathcal{P}$, this implies that the cell-stabilizers of  $\overline{\Gamma} \curvearrowright \overline{X}$ are conjugate to finite-index subgroups of $P_i/(K\cap P_i)$ (the elements of $\overline{\cP}$). Thus the action of $\Delta$ on $Y$ is relatively geometric.
\end{proof}

\section{Relatively geometric actions: the K\"ahler case}\label{sec:ending}

In this section, we apply Theorem \ref{thm:DehnFilling} to prove Theorem \ref{thm:NoKahlerAction}.  The main idea is to use Dehn filling to produce a minimal action of a finite-index subgroup of $\Gamma$ on a tree with finite kernel.  A deep result of Gromov--Schoen implies that any K\"ahler group admitting a minimal acting on tree with finite kernel must be virtually a hyperbolic surface group \cite{gromov1992harmonic}.  

\begin{proof}[Proof of Theorem \ref{thm:NoKahlerAction}]
Suppose that $(\Gamma,\mathcal P)$ acts relatively geometrically on a $\mathrm{CAT}(0)$ cube complex. Since the elements of $\mathcal{P}$ are residually finite, there exists a sufficiently long, peripherally finite $\mathcal{Q}$--filling $ \Gamma \to \overline{\Gamma}= \Gamma/K$ which satisfies the hypotheses of Theorem~\ref{thm:DehnFilling}.(\eqref{item:hypQ}), so $\overline{\Gamma}$ is hyperbolic and $\overline{X}=\leftQ{X}{K}$ is a $\mathrm{CAT}(0)$ cube complex. Let $\Gamma_0\leq \Gamma$ be a finite-index subgroup such that $\Gamma_0$ is torsion-free and $\leftQ{X}{\Gamma_0}$ is special, which exists by \cite[Theorem 1.4]{einstein2022relatively}.

Cutting along an embedded essential two-sided hyperplane $H$ in $\leftQ{X}{\Gamma_0}$ yields a splitting of $\Gamma_0$ according to the complex of groups version of van Kampen's Theorem \cite[III.$\mathcal{C}$.3.11.(5), III.$\mathcal{C}$.3.12, p.552]{Bridson}.\footnote{One can also see this tree directly by considering the dual tree to the collection of hyperplanes of $X$ which project to $H$.  See \cite[Remark 1.1]{groves2018hyperbolic} for more details.}   The edge group of such a splitting is a hyperplane stabilizer for the $\Gamma_0$--action on $X$, which is relatively quasi-convex by \cite[Corollary 4.11]{einstein2021separation}, and infinite-index since the hyperplane is essential.  The action of $\Gamma_0$ on the Bass--Serre tree $T$ associated to this splitting has finite kernel, since any normal subgroup contained in an infinite-index relatively quasi-convex subgroup is finite. Let $F$ denote the kernel of the action of $\Gamma_0$ on $T$.

 By \cite{gromov1992harmonic}, the induced action of $\Gamma_0$ on $T$ factors through a surjective homomorphism $\varphi
 \co \Gamma_0 \rightarrow \Delta_0$, where $\Delta_0\leq \PSL_2(\R)$ is a cocompact lattice. The kernel of $\varphi$ is contained in $F$, hence finite, so $\Gamma_0$ is commensurable up to finite kernels with $\Delta_0$, which is itself virtually a hyperbolic surface group. Since any group commensurable up to finite kernels with a hyperbolic surface group is virtually a hyperbolic surface group, this means that $\Gamma_0$, and hence $\Gamma$, is virtually a hyperbolic surface group, as desired.
\end{proof}

\section{Relatively geometric actions: Lattices in $\PU(n,1)$}\label{sec:No Lattice Action}

Let $\Gamma$ be a non-uniform lattice in $\text{PU}(n,1)$. Then $\Gamma$ acts properly discontinuously on complex hyperbolic space $\Hy^n_\C$ and the quotient, which we henceforth denote by  $M=\leftQ{\Hy^n_\C}{\Gamma}$, is a non-compact orbifold of finite volume with finitely many cusps.  Each cusp corresponds to a conjugacy class of subgroups stabilizing a parabolic fixed point in $\partial_\infty\Hy^n_\C$.  Farb \cite{Farb-RH} proved that $\Gamma$ is hyperbolic relative to the collection of these cusp subgroups, which we denote by $\mathcal P$. In this section, we prove Theorem \ref{thm:NoRelativeAction}, namely that $(\Gamma,\cP)$ does not admit a relatively geometric action on a $\mathrm{CAT}(0)$ cube complex. 

Throughout the course of the proof, we pass freely to finite-index subgroups by invoking Lemma \ref{lem:Finite Index Action}. In order to streamline the exposition, we do not refer to Lemma \ref{lem:Finite Index Action} each time. First we reduce the Theorem \ref{thm:NoRelativeAction} to the case where $\Gamma$ is torsion-free.

\begin{lemma}\label{lem:Torsion Free SU(n,1)}
$\Gamma$ has a torsion-free subgroup of finite index.
\end{lemma}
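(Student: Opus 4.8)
The statement is the classical fact that a finitely generated linear group is virtually torsion-free, applied to the lattice $\Gamma \le \PU(n,1)$. The plan is to invoke Selberg's lemma: since $\Gamma$ is a finitely generated subgroup of $\PU(n,1)$, which is a linear group (say after passing to a finite cover, a subgroup of $\mathrm{SU}(n,1) \le \mathrm{GL}_{n+1}(\C)$), it is a finitely generated linear group over a field of characteristic zero, hence virtually torsion-free. More precisely, the entries of a finite generating set of $\Gamma$ lie in a finitely generated subring $R$ of $\C$; choosing a maximal ideal $\mathfrak{m}$ of $R$ whose residue field is finite and of characteristic $p$ coprime to the orders of the (finitely many up to conjugacy) torsion elements, the congruence kernel $\ker(\Gamma \to \mathrm{GL}_{n+1}(R/\mathfrak{m}))$ is a finite-index subgroup which is torsion-free, because a nontrivial element of finite order in $\mathrm{GL}_{n+1}$ over characteristic zero cannot reduce to the identity mod $\mathfrak{m}$.

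First I would fix an explicit linear representation: lift $\Gamma$ to $\widetilde\Gamma \le \mathrm{SU}(n,1)$ (the preimage under $\mathrm{SU}(n,1) \to \PU(n,1)$), which is a central extension of $\Gamma$ by a finite cyclic group, so that a finite-index torsion-free subgroup of $\widetilde\Gamma$ maps to a finite-index torsion-free subgroup of $\Gamma$; alternatively one can quote the adjoint embedding $\PU(n,1) \hookrightarrow \mathrm{GL}(\mathfrak{su}(n,1))$ directly. Then I would apply the standard form of Selberg's lemma to the finitely generated linear group $\widetilde\Gamma$. This is entirely standard and I would simply cite it (e.g. Selberg, or the textbook treatment in Alperin's note, or Ratcliffe's book), remarking that lattices in Lie groups are finitely generated.

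There is essentially no obstacle here; the only point requiring a sentence of care is the passage between $\mathrm{SU}(n,1)$ and $\PU(n,1)$, i.e. checking that virtual torsion-freeness is inherited by the quotient $\Gamma = \widetilde\Gamma / Z$ for $Z$ finite central — which holds because if $\widetilde\Gamma_0 \le \widetilde\Gamma$ is torsion-free of finite index then its image in $\Gamma$ is torsion-free of finite index (an element of finite order in the image would lift to an element of $\widetilde\Gamma_0 \cdot Z$, a finite extension of a torsion-free group, of finite order, hence lie in a finite subgroup of $\widetilde\Gamma_0$, forcing it to be trivial). So the proof is: $\Gamma$ is a quotient by a finite central subgroup of a finitely generated linear group in characteristic zero, which is virtually torsion-free by Selberg's lemma, and virtual torsion-freeness passes to such quotients.
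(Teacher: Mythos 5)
Your proposal is correct and follows essentially the same route as the paper: lift $\Gamma$ to its preimage in $\mathrm{SU}(n,1)$ (a finitely generated linear group), apply Selberg's lemma to obtain a finite-index torsion-free subgroup there, and then observe that this subgroup meets the finite center trivially and hence descends to a finite-index torsion-free subgroup of $\Gamma$. The only cosmetic difference is that the paper argues the torsion-free subgroup maps \emph{isomorphically} to its image (since it intersects the finite kernel trivially), whereas you argue directly that the image is torsion-free; these are equivalent.
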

\begin{proof}
We have a short exact sequence $$1\to \Z/(n+1)\Z\to \text{SU}(n,1)\to \text{PU}(n,1)\to 1.$$
 Restricting to $\Gamma$, we get a short exact sequence $$1\to \Z/(n+1)\Z\to \Lambda\to \Gamma\to 1,$$
 where $\Lambda$ is the pre-image of $\Gamma$ in $\text{SU}(n,1)$. Since $\Gamma$ is finitely generated and $\Z/(n+1)\Z$ is finite, $\Lambda$ is finitely generated. As $\SU(n,1)$ is linear, Selberg's lemma implies that $\Lambda$ has a finite-index torsion-free subgroup, say, $\Lambda_0$. Thus $\Lambda_0\cap \Z/(n+1)\Z=1$ and hence it is mapped isomorphically to finite-index subgroup $\Gamma_0\leq \Gamma$. 
\end{proof}

Following Lemma \ref{lem:Torsion Free SU(n,1)}, for the remainder of this section we assume that $\Gamma\leq \PU(n,1)$ is torsion-free.

\subsection{The structure of cusps}\label{sec:Cusp Structure}We now briefly review the geometric structure of cusps in $M$.  For more details see \cite{GoldmanBook}.
 Recall that up to scaling each horosphere in $\Hy^n_\C$ is isometric to $\mc{H}_{2n-1}(\R)$, the $(2n-1)$--dimensional real Heisenberg group, equipped with a left-invariant metric. The Heisenberg group is a central extension
 \begin{equation}\label{eqn:Heisenberg}1\rightarrow \R\rightarrow \mc{H}_{2n-1}(\R)\rightarrow \R^{2n-2}\rightarrow 1
\end{equation}
with extension $2$--cocycle equal to the standard symplectic form \[\displaystyle\omega=2\sum_{i=1}^{n-1}dx_i\wedge dy_i,\] where $(x_1,y_1,\ldots,x_{n-1},y_{n-1})$ are coordinates on $\R^{2n-2}.$ The Lie algebra $\mathfrak{h}_{2n-1}$ is $2$--step nilpotent with basis $\{X_1,~Y_1,\ldots,X_n,~Y_n,~Z\}$ where \[[X_i,Y_i]=Z\] and all other brackets vanish. Thus $Z$ generates the center of  $\mathfrak{h}_{2n-1}$ representing the kernel $\R$ in Equation \eqref{eqn:Heisenberg}, while the remaining coordinates project to 
 the generators of $\R^{2n-2}$. Choosing the identity matrix $I_{2n-1}$ as the inner product on $\mathfrak{h}_{2n-1}$, we see that the isometry group of $\mc{H}_{2n-1}(\R)$ is isomorphic to $\mc{H}_{2n-1}(\R)\rtimes U(n-1)$, where the $\mc{H}_{2n-1}(\R)$ factor is the action of $\mc{H}_{2n-1}(\R)$ on itself by left translation, and the unitary group $U(n-1)$ is the stabilizer of the identity. Indeed, any isometry which fixes $1\in \mc{H}_{2n-1}(\R)$ must also be a Lie algebra isomorphism; it therefore preserves the center $\langle Z\rangle$ and induces an isometry of $\R^{2n-2}\cong \langle X_1,Y_1,\cdots, X_{n-1},Y_{n-1}\rangle$ preserving $\omega$.  We conclude that such an isometry lies in $U(n-1)=O_{2n-2}(\R)\cap \Sp_{2n-2}(\R)$.
 
\begin{definition} Let $\pi\co  \mc{H}_{2n-1}(\R)\rtimes U(n-1)\rightarrow U(n-1)$ be the projection. For any $g\in \mc{H}_{2n-1}(\R)\rtimes U(n-1)$, we call $\pi(g)$ the \emph{rotational part} of $g$.
\end{definition}

Since the center of $\mc{H}_{2n-1}(\R)$ is invariant under any isometry we have a short exact sequence
\begin{equation}\label{eqn:Isometry group exact sequence}
    1\rightarrow \R=Z(\mc{H}_{2n-1}(\R))\rightarrow \mc{H}_{2n-1}(\R)\rtimes U(n-1)\rightarrow \R^{2n-2}\rtimes U(n-1)\rightarrow 1
\end{equation}

Since $\Gamma$ is torsion-free, each cusp subgroup $P\leq \Gamma$ is isomorphic to a discrete, torsion-free, cocompact subgroup of $\Isom(\mc{H}_{2n-1}(\R))$. In particular, $P_0=P\cap \mc{H}_{2n-1}(\R)$ is a discrete cocompact subgroup and $P\cap Z(\mc{H}_{2n-1}(\R))\cong\Z$. By Equation \ref{eqn:Isometry group exact sequence}, $P$ fits into a short exact sequence 
  \begin{equation}\label{eqn:cusp exact sequence}1\rightarrow \Z=P\cap Z(\mc{H}_{2n-1}(\R))\rightarrow P\rightarrow \Lambda \rightarrow 1\end{equation}
where $\Lambda$ is a discrete cocompact subgroup of $\R^{2n-2}\rtimes U(n-1)$. It follows that $\Lambda$ has a finite-index subgroup $\Lambda_0$ isomorphic to $\Z^{2n-2}$, which is the image of $P_0$.

On the level of quotient spaces, the sequence in Equation \eqref{eqn:cusp exact sequence} has the following translation.  The quotient space $\mathcal{O}=\leftQ{\R^{2n-2}}{\Lambda}$ is a Euclidean orbifold finitely covered by the $(2n-2)$--dimensional torus $T=\leftQ{\C^{n-1}}{\Lambda_0}$, and $\Sigma=\leftQ{\mc{H}_{2n-1}(\R)}{P}$ is  the total space of an $S^1$--bundle over $\mathcal{O}$, \emph{i.e.}, there is a fiber sequence 
\begin{equation}\label{eqn:Seifert Sequence}S^1\hookrightarrow \Sigma\rightarrow \mathcal{O}
\end{equation}
Since $\mathcal{O}$ need not be smooth, this is not generally a locally trivial fibration.  However, as $P$ is torsion-free, $\Sigma$ is smooth.  Passing to the torus cover, we obtain an actual fiber bundle \[S^1\hookrightarrow \widehat{\Sigma}\rightarrow T\]
The finite group $F=P/P_0$ acts on $\widehat{\Sigma}$ preserving the fibration, hence defines a finite group of isometries of $T$. Thus the stabilizer of a point in $T$ acts freely on the $S^1$ fiber. Since the action of $F$ on $\widehat{\Sigma}$ is free, it follows that point stabilizers in $T$ must be cyclic of finite order, and act by rotations on the fiber.   Since $F\leq U(n-1)$, any abelian subgroup is diagonalizable.  Thus, locally each point in $N$ has a neighborhood of the form $(S^1\times \D^{n-1})/(\Z/m\Z)$ where $\D\subset \C$ is the open unit disk, and $\Z/m\Z$ acts on $S^1$ by rotation by $2\pi/m$ and on the polydisk $\D^{n-1}$ by a diagonal unitary matrix $\displaystyle \Delta=\text{diag}\left(e^{\frac{2\pi k_1}{m}}, \ldots, e^{\frac{2\pi k_{n-1}}{m}}\right)$, where at least one $k_i$ is coprime to $m$. See Figure \ref{fig:Seifert} for a schematic.  Since $F$ acts by rotation on each fiber, $\Sigma$ is the boundary of a disk bundle over $\mathcal{O}$, which we denote by $E_\mathcal{O}$.

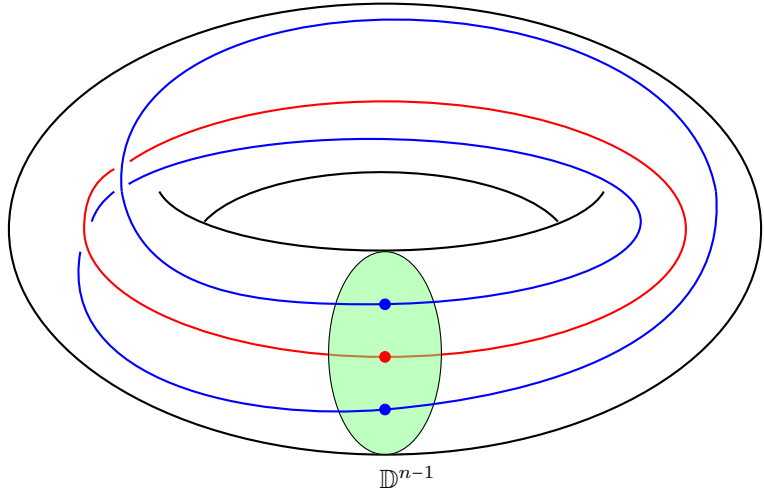
\begin{figure}[h]
    \centering
    \begin{tikzpicture}
        \draw[thick] (-5,0) arc(-180:180:5cm and 3cm);
        \draw[thick] (-3,.5) arc(-170:-10:3cm and .95cm);
        \draw[thick] (-2.4,.1) arc(160:20:2.5cm and 1cm);
        \draw[thick,red] (-4,0) arc(-180:148:4cm and 1.7cm);
        \draw[thick,red] (-4,0) to[out=90,in=210] (-3.6,.8);
        \filldraw[green!50,opacity=.5] (0,-.3) arc(90:450:.75cm and 1.35cm);
        \draw(0,-.3) arc(90:450:.75cm and 1.35cm);
      
        \draw[thick,blue](-.2,-1) arc(-90:153:3.6cm and 1.1cm);
        \draw[thick, blue](-3.6,.5) to[out=220,in=70] (-3.9,.1);
        \draw[thick, blue](-4.05,-.3) to[out=260,in=185] (0,-2.4);

        \filldraw[blue](0,-1) circle (2pt);
        \filldraw[red](0,-1.7) circle (2pt);
        \filldraw[blue](0,-2.4) circle (2pt);
        
       %  \draw[thick,blue](-.2,-1) arc(-90:170:2.55cm and 1.1cm);
       %   \draw[dotted,thick,blue] (-2.75,0.2) to[out=240,in=-200] (-2.2,-2.7);
       % \draw[thick,blue] (-2.1,-2.7) to[out=-10,in=-175] (0.25,-2.7);

       \draw[thick,blue] (0,-2.4) to[out=5,in=-85] (4.4,0.5);
       \draw[thick,blue] (4.4,0.5) to[out=100,in=95] (-3.5,0.5);
       \draw[thick,blue] (-3.5,0.5) to[out=-80,in=180](-.2,-1);

       \node at (.3,-3.3){$\mathbb{D}^{n-1}$};
     
       \end{tikzpicture}
    \caption{Local picture of the fibration in Equation \ref{eqn:Seifert Sequence} near a singular point of $\mathcal{O}$. A nonsingular fiber, shown in blue, winds $m=2$ times around the singular fiber, shown in red. }
    \label{fig:Seifert}
\end{figure}

Recall that the center of $\mc{H}_{2n-1}(\R)$ is quadratically distorted.  It follows that the center of $P$ is quadratically distorted as well.  By \cite[Theorem 1.5]{haglund2021isometries}, there is no proper action of $P$ on a $\mathrm{CAT}(0)$ cube complex.  Therefore, we have:

\begin{proposition}\label{prop:No Proper Action}
Let $\Gamma\leq \PU(n,1)$ be a non-uniform lattice, and suppose $\Gamma$ acts on a $\mathrm{CAT}(0)$ cube complex $X$. The action of each cusp subgroup of $\Gamma$ is not proper.  In particular, $\Gamma$ is not cubulated.
\end{proposition}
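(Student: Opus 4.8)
The plan is to reduce the statement to the single assertion that a cusp subgroup $P\le\Gamma$ admits no proper action on any $\mathrm{CAT}(0)$ cube complex, which is exactly the input quoted just above from \cite[Theorem~1.5]{haglund2021isometries}. So I would first pin down the distortion claim carefully. Since $\Gamma$ is torsion-free, $P_0=P\cap\mc{H}_{2n-1}(\R)$ is a cocompact lattice in $\mc{H}_{2n-1}(\R)$ of finite index in $P$ (the quotient $F=P/P_0$ being finite); in particular $P$ is finitely generated and quasi-isometric to $\mc{H}_{2n-1}(\R)$. Under any such quasi-isometry the central subgroup $C=P\cap Z(\mc{H}_{2n-1}(\R))\cong\Z$ is carried coarsely onto the center $\R<\mc{H}_{2n-1}(\R)$, which is quadratically distorted; hence, writing $z$ for a generator of $C$, one gets $|z^m|_P\asymp\sqrt{m}$.

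Next I would spell out why this precludes a proper action, using Haglund's results on semisimplicity of automorphisms of $\mathrm{CAT}(0)$ cube complexes \cite{haglund2021isometries}. Suppose $P$ acted properly on a $\mathrm{CAT}(0)$ cube complex $Y$ and fix a vertex $y\in Y$. The element $z$ acts semisimply, hence is either elliptic or combinatorially hyperbolic; it cannot be elliptic, since then a power of $z$ would fix a vertex and properness would force $z$ to have finite order. So $z$ is hyperbolic with a combinatorial axis, and $m\mapsto z^m y$ is a quasi-isometric embedding of $\Z$, giving $d_Y(y,z^m y)\ge c\lvert m\rvert$ for some $c>0$. On the other hand the orbit map $g\mapsto gy$ is Lipschitz for a word metric on $P$, so $d_Y(y,z^m y)\le L\,|z^m|_P\asymp L\sqrt{m}$. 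For large $m$ these bounds are incompatible, so no proper action of $P$ exists; equivalently, one may simply invoke \cite[Theorem~1.5]{haglund2021isometries} in the form quoted before the statement.

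Finally I would assemble the proposition. Given any action of $\Gamma$ on a $\mathrm{CAT}(0)$ cube complex $X$, restricting to $P$ yields an action of $P$ on $X$, which by the previous step is not proper; this is the first assertion. For the ``in particular'', if $\Gamma$ were cubulated it would act properly and cocompactly on some $\mathrm{CAT}(0)$ cube complex, and the restriction of a proper action to a subgroup is again proper, contradicting what was just shown. I do not anticipate a real obstacle: the whole argument rests on Haglund's theorem, and the only care needed is (i) checking that $P$ is finitely generated and that $C$ genuinely inherits quadratic distortion from $\mc{H}_{2n-1}(\R)$ — routine once one knows $P_0$ is a cocompact lattice of finite index in $P$ — and (ii) noting that the relevant statement uses only properness, not cocompactness, of the restricted action, which is precisely the generality in which Haglund's result applies.
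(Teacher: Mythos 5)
Your proposal is correct and follows essentially the same path as the paper: establish that the central $\Z$ in a cusp subgroup $P$ is quadratically distorted (inherited from the quadratic distortion of the center of the Heisenberg group, since $P$ contains a cocompact lattice $P_0 \le \mc{H}_{2n-1}(\R)$ with finite index), then invoke Haglund's theorem that a finitely generated group with a distorted infinite cyclic subgroup admits no proper action on a $\mathrm{CAT}(0)$ cube complex. The only difference is that you additionally unpack the mechanism behind Haglund's theorem — combinatorial semisimplicity forces a distorted infinite-order element to be neither elliptic (properness would give finite order) nor hyperbolic (hyperbolic elements move vertices linearly, contradicting the sublinear growth $\asymp\sqrt m$) — which is a useful elaboration but not a genuinely different route; the paper just cites the theorem directly.
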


\subsection{The toroidal compactification of $M$}Another natural compactification of $M$ fills in the cusps with the Euclidean orbifolds described in Section \ref{sec:Cusp Structure}. Let $\mathcal{O}_i$ be the Euclidean orbifold quotient of $\Sigma_i$, with corresponding disk bundle $E_{i}$. Thus, we can identify $E_{i}\setminus \mathcal{O}_i$ with the cusp $\mathcal{C}_i$, then compactify $M$ by adding $\sqcup_i\mathcal{O}_i$ at infinity.  The result is a K\"ahler orbifold $\mathcal{T}(M)$ with boundary divisor $D=\sqcup_i \mathcal{O}_i$. The pair $(\mathcal{T}(M),D)$ is called the \emph{toroidal compactification of $M$}. See \cite{HummelSchroeder, Ashetal} for more details.

When the parabolic elements in $\Gamma$ have trivial rotational part, then  each $\mathcal{O}_i$ is a $(2n-2)$--dimensional torus, $\mathcal{T}(M)$ is a smooth K\"ahler manifold and $D$ is a smooth divisor in $\mathcal{T}(M)$. Moreover, Hummel--Schroeder show that $\mathcal{T}(M)$ admits a nonpositively curved Riemannian metric \cite{HummelSchroeder}.  In particular, $\mc{T}(M)$ is aspherical; if $\Delta=\pi_1(\mc{T}(M))$ then $\mc{T}(M)$ is a $K(\Delta,1)$. The following lemma ensures that we can always find a finite cover of $M$ whose toroidal compactification is smooth.

\begin{lemma}\label{lem: Smooth Toroidal Compactification} Let $\Gamma\leq \PU(n,1)$ be torsion-free and let $M=\leftQ{\Hy^n_\C}{\Gamma}$ be the quotient. There exists a finite cover $M'\rightarrow M$ such that the toroidal compactification of $M'$ is smooth.  
\end{lemma}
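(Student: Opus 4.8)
The plan is to produce the smooth cover by arranging that every parabolic element of a suitable finite-index subgroup $\Gamma' \le \Gamma$ has trivial rotational part, since by the discussion preceding the lemma this forces each boundary divisor component $\mathcal{O}_i$ to be an honest torus and $\mathcal{T}(M')$ to be a smooth K\"ahler manifold with smooth divisor $D$. So the whole problem reduces to a residual-finiteness/separability statement: find a finite-index subgroup of $\Gamma$ that meets the kernel of the rotational-part map on each cusp trivially, or more precisely whose intersection with each cusp subgroup $P_i$ consists of elements with trivial rotational part.

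First I would recall from Section~\ref{sec:Cusp Structure} that each cusp subgroup $P \le \Gamma$ sits inside $\Isom(\mc{H}_{2n-1}(\R)) = \mc{H}_{2n-1}(\R) \rtimes U(n-1)$, and that the rotational part is the image under the projection $\pi$ to $U(n-1)$; the kernel $P \cap \mc{H}_{2n-1}(\R) = P_0$ is a lattice in the Heisenberg group, and $F = P/P_0$ is a finite subgroup of $U(n-1)$ (this finiteness was established above from the fact that $F$ acts freely on $\widehat\Sigma$ and preserves the $S^1$-fibration over $T$). Thus for each of the finitely many cusp subgroups $P_1,\dots,P_k$ there is a finite quotient $P_i \twoheadrightarrow F_i$ whose kernel is exactly the ``no rotational part'' subgroup $P_{i,0}$. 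The goal is a single finite-index normal $\Gamma' \unlhd \Gamma$ with $\Gamma' \cap P_i \le P_{i,0}$ for every $i$.

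The mechanism I would use is separability of $P_{i,0}$ in $\Gamma$: lattices in $\PU(n,1)$ are linear, hence residually finite, and in fact (being linear finitely generated groups) they are LERF on virtually all the subgroups we need — but more robustly, since $P_{i,0} = P_i \cap \mc{H}_{2n-1}(\R)$ has finite index in $P_i$ and $P_i$ is a relatively quasiconvex (indeed peripheral) subgroup of the relatively hyperbolic group $(\Gamma, \cP)$, standard separability results for peripheral subgroups and their finite-index subgroups in residually finite relatively hyperbolic groups give a finite-index subgroup $\Gamma'_i \le \Gamma$ with $\Gamma'_i \cap P_i \le P_{i,0}$. Intersecting the $\Gamma'_i$ over $i = 1,\dots,k$ and passing to the normal core yields a finite-index normal $\Gamma' \unlhd \Gamma$. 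I would then check that $\Gamma'$ is the subgroup we want: its cusp subgroups are among the conjugates $(P_i)^g \cap \Gamma'$, and conjugation by $g \in \Gamma$ carries the rotational-part structure along, so each cusp subgroup of $\Gamma'$ has trivial rotational part; hence the associated divisor components are tori and $\mathcal{T}(M')$ is smooth, with $M' = \leftQ{\Hy^n_\C}{\Gamma'} \to M$ the desired finite cover. Since $\Gamma$ is already assumed torsion-free, $M'$ is a manifold and the compactification is a genuine smooth manifold.

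The main obstacle I anticipate is the separability input: I need to know that the specific finite-index subgroup $P_{i,0}$ of each peripheral group $P_i$ is separable in $\Gamma$, uniformly enough to intersect all cusps simultaneously. For lattices in $\PU(n,1)$ this should follow either from linearity plus a congruence/arithmeticity argument (Selberg's lemma already gave torsion-freeness; a similar argument controlling the rotational part works), or from the general fact that in a residually finite relatively hyperbolic group every finite-index subgroup of a peripheral subgroup is separable — this is essentially because one can pass to a Dehn filling killing enough of $P_i$, which is exactly the kind of statement available from the relatively hyperbolic machinery reviewed in Section~\ref{sec:ReviewActions}. I would present the argument via this relatively hyperbolic route to keep it self-contained, with the linearity of $\Gamma$ as the underlying source of residual finiteness.
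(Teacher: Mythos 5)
Your high-level strategy is the same as the paper's: find a finite-index normal subgroup $\Gamma' \unlhd \Gamma$ all of whose parabolic elements have trivial rotational part, and observe that this forces each boundary component of the toroidal compactification to be a torus and the total space to be a smooth manifold. The reduction to the separability-type statement $\Gamma' \cap P_i \leq P_{i,0}$ and the normal-core/conjugacy bookkeeping are also fine.

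The genuine gap is in how you propose to obtain $\Gamma'$. Neither of the two routes you sketch actually delivers the separability input. First, finitely generated linearity gives residual finiteness (Mal'cev), but \emph{not} LERF or separability of a specified finite-index subgroup of a peripheral subgroup; for lattices in $\PU(n,1)$, subgroup separability is a deep open problem. Residual finiteness lets you avoid any finite set of nonidentity elements, but avoiding finitely many coset representatives of $P_{i,0}$ in $P_i$ does \emph{not} force $\Gamma'\cap P_i\subseteq P_{i,0}$, since the intersection could pick up other elements of those cosets. Second, the ``general fact'' you invoke about relatively hyperbolic groups with residually finite peripherals is not a theorem: the Dehn-filling argument you have in mind reduces to residual finiteness of the hyperbolic filled quotient $\overline\Gamma$, which is open for hyperbolic groups in general and only becomes available here once one has virtual specialness of the filling (Theorem~\ref{thm:DehnFilling}), which in turn presupposes the relatively geometric cube-complex action. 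The paper deliberately proves this lemma \emph{before} assuming that action, so that route is not available at this point. Third, the arithmeticity variant does not cover all the cases: non-arithmetic non-uniform lattices in $\PU(n,1)$ exist for $n=2,3$, and arithmeticity is only conjectural for $n\geq 4$.

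What the paper actually uses is the main theorem of Hummel \cite{hummel1998rank}: for a rank-one non-uniform lattice $\Gamma$, there is a \emph{finite} set $F\subset\Gamma$ of parabolic isometries such that any normal subgroup $N\unlhd\Gamma$ with $N\cap F=\emptyset$ has \emph{all} of its parabolic elements with trivial rotational part. This is a structural theorem (using the Margulis lemma and the algebra of rank-one parabolics), much stronger than residual finiteness; once one has it, residual finiteness of $\Gamma$ produces a finite-index normal $\Gamma'$ avoiding $F$, and one is done. Your proof needs this theorem (or an equivalent) as an explicit input; without it the separability step does not close.
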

\begin{proof}
By the main theorem of \cite{hummel1998rank} (p. 2453), there exists a finite subset $F\subset \Gamma$ of parabolic isometries such that if a $N\unlhd\Gamma$ is a normal subgroup satisfying $F\cap N=\emptyset$, then any parabolic isometry in $N$ has no rotational part. Since $\Gamma$ is residually finite and $F$ is finite, we can find a finite-index normal subgroup $\Gamma'\unlhd \Gamma$ such that $\Gamma'\cap F=\emptyset$. 
Therefore the finite cover $M':=\leftQ{\Hy^n_\C}{\Gamma'}$ of $M$ admits a toroidal compactification which is smooth.
\end{proof}

For the rest of this section, we assume that $\mathcal{T}(M)$ is smooth.  Since $M_0\setminus \partial M_0\cong M\cong \mathcal{T}(M)\setminus D$, there is a natural map of pairs $f\co (M_0, \partial M_0)\rightarrow (\mathcal{T}(M),D) $ that is a diffeomorphism on the interior of $M_0$ and sends $\partial M=\sqcup \Sigma_i\rightarrow D=\sqcup_i\mathcal{O}_i$ via the fibering in Equation \ref{eqn:Seifert Sequence}.

\subsection{Proof of Theorem~\ref{thm:NoRelativeAction}}
We now have all the ingredients necessary to prove Theorem \ref{thm:NoRelativeAction}.
\begin{proof}[Proof of Theorem \ref{thm:NoRelativeAction}]
By Lemma \ref{lem:Torsion Free SU(n,1)} and Lemma \ref{lem: Smooth Toroidal Compactification}, we may assume that $\Gamma\leq \SU(n,1)$ is torsion-free, and that the toroidal compactification $\mathcal{T}(M)$ is smooth. In particular, $\Gamma$ and all of its peripheral subgroups are residually finite. 

Suppose $(\Gamma,\mathcal{P})$ admits a relatively geometric action on a $\mathrm{CAT}(0)$ cube complex $X$.  Given a finite-index subgroup $\Gamma_0\leq \Gamma$, let $\mathcal{P}_0$ be the induced peripheral structure on $\Gamma_0$, and let $\Delta_0$ be $\pi_1(\mathcal{T}(M_0))$, where $M_0=\leftQ{\Hy_\C^n}{\Gamma_0}$. Since the kernel of the quotient map $\Gamma_0\rightarrow \Delta_0$ is normally generated by subgroups in $\mathcal{P}_0$, we get an induced peripheral structure $(\Delta_0,\mathcal{A}_0)$, where $\mathcal{A}_0$ is the collection of images of elements of $\mathcal{P}_0$. Our strategy is to show that there exists a finite-index subgroup $\Gamma_0\leq \Gamma$ so that the pair $(\Delta_0,\mathcal{A}_0)$ is relatively hyperbolic and admits a relatively geometric action on a $\mathrm{CAT}(0)$ cube complex.  Since $\mathcal{T}(M_0)$ is smooth (since $\mathcal{T}(M)$ is), $\Delta$ is K\"ahler.  Thus, as $n\geq 2$, we will get a contradiction by Theorem \ref{thm:NoKahlerAction}.

Let $\mathcal{P}=\{P_1,\ldots, P_k\}$ be  the induced peripheral structure on $\Gamma$. 
Now let $Z(P_i)$ be the center of $P_i$. We apply Theorem \ref{thm:DehnFilling}(1) to a sufficiently long $\mathcal{Q}$--filling $\mathcal{Z}=\{Z_1,\ldots, Z_k\}$ where $Z_i\leq Z(P_i)$ is a finite-index subgroup. We then obtain a Dehn filling $\psi \co \Gamma \to \Delta = \Gamma / K$ determined by the $Z_i$ so that $Y=\leftQ{X}{K}$ is a $\mathrm{CAT}(0)$ cube complex.

Let $(\Delta,\mathcal{A})$ be the induced peripheral structure on $\Delta$.  By Theorem \ref{thm:DehnFilling}, we know that $(\Delta,\mathcal{A})$ is relatively hyperbolic.  Lemma~\ref{lem:quotientRG} implies that the action of $\Delta$ on $Y$ is relatively geometric.

Finally, we claim that there exists a finite-index subgroup $\Delta_0\leq \Delta$ that is torsion-free. Since the elements of $\mathcal{A}$ are virtually abelian, hence residually finite, we know that $\Delta$ is also residually finite by Corollary 1.7 of \cite{einstein2022relatively}. Since $\Gamma$ is torsion-free, by \cite[Theorem 4.1]{GM-elementary} so long as the filling $\Gamma \to \Delta$ is long enough (which we may assume without loss of generality), any element of finite order in $\Delta$  is conjugate into some element of $\mathcal{A}$. As there are finitely many elements of $\mathcal{A}$, each of which has only finitely many conjugacy classes of finite order elements, we can find a finite-index subgroup $\Delta_0\leq \Delta$ which avoids each of these conjugacy classes, hence is torsion-free. 
The induced peripheral structure $(\Delta_0,\mathcal{A}_0)$ is relatively hyperbolic and $\Delta_0\curvearrowright Y$ is relatively geometric by Lemma \ref{lem:Finite Index Action}. Let $\Gamma_0=\psi^{-1}(\Delta_0)$ and let $\mathcal{P}_0=\{P_{0,1},\ldots,P_{0,r}\}$ be the induced peripheral structure on $\Gamma_0$. Then $K\leq \Gamma_0$, and since $\Delta_0$ is torsion-free, this implies $K\cap P_{0,i}=Z(P_{0,i})$ for each $i$. As the $\mathcal{P_0}$ is the collection of cusp subgroups of $M_0=\leftQ{\Hy_\C^n}{\Gamma_0}$, we conclude that $\Delta_0=\pi_1(\mathcal{T}(M))$. Thus, $\Delta_0$ is K\"ahler and acts relatively geometrically on $Y$.  By Theorem \ref{thm:NoKahlerAction}, we conclude that $\Delta_0$ is virtually a hyperbolic surface group, which is impossible because  $\Delta_0$ contains a subgroup isomorphic to $\Z^{2n-2}$ and $n\geq 2$. This contradiction completes the proof.
\end{proof}

\begin{remark}
In \cite[Definition 1.9]{GrovesManningSpecializing}, Groves--Manning introduce the notion of a \emph{weakly relatively geometric action} on a $\mathrm{CAT}(0)$ cube complex. We can replace ``relatively geometric" with ``weakly relatively geometric" in Theorem~\ref{thm:NoRelativeAction} using similar arguments.  Indeed, after performing the toroidal filling of $(\Gamma,\cP)$ to land in the K\"ahler setting, we can perform a further peripherally finite filling to obtain a hyperbolic quotient, which is virtually special \cite[Theorem 4.5]{GrovesManningSpecializing}.

In this case the cube complex for the quotient is not $\leftQ{X}{K}$ where $K$ is the kernel of the filling homomorphism $\Gamma\rightarrow \overline{\Gamma}$.   Indeed, the action of $\overline{\Gamma}$ on $\leftQ{X}{K}$ in general has cell stabilizers that are virtually free. Nevertheless, Theorem D of \cite{groves2018hyperbolic} implies that in this case $\overline{\Gamma}$ is still cubulated hyperbolic.  The arguments from the remainder of the proof of Theorem~\ref{thm:NoRelativeAction} still apply.
\end{remark}

\bibliography{KahlerRelativelyGeometric} 
\bibliographystyle{plain} 

\end{document}